\newtheorem{theorem}{Theorem}
\newtheorem*{theorem*}{Theorem}
\newtheorem{lemma}{Lemma}
\newtheorem*{lemma*}{Lemma}
\newtheorem*{remark*}{Remark}
\newtheorem*{fact*}{Fact}
\newtheorem*{proof*}{Proof}
\newtheorem{definition}{Definition}
\newtheorem{corollary}{Corollary}
\newcommand{\algmargin}{\the\ALG@thistlm}
\newlength{\whilewidth}
\algnewcommand{\parState}[1]{\State%
	\parbox[t]{\dimexpr\linewidth-\algmargin}{\strut #1\strut}}
\newlength\myindent 
\begin{document}

	\title{\LARGE \bf
		Distribution System Voltage Control under Uncertainties using Tractable Chance Constraints
	}

	\author{Pan Li, Baihong Jin, Dai Wang and Baosen Zhang
		
		\thanks{Pan Li is with Facebook, Inc., Menlo Park, CA  94025. This work was done while she was with the Electrical Engineering Department, University of Washington, Seattle, WA 98195, {\tt\small {lipan419}@gmail.com}.}%
		
		\thanks{Baihong Jin is with the Electrical Engineering and Computer Science Department, University of California, Berkeley, CA, 94720, {\tt\small {bjin}@berkeley.edu}.}
		\thanks{Dai Wang is with Tesla Inc., Palo Alto, CA, 94304, {\tt\small {daiwang}@tesla.com}.}
		\thanks{Baosen Zhang is with the Electrical Engineering Department, University of Washington, Seattle, WA 98195,       {\tt\small { zhangbao}@uw.edu}.}%
	}

	\maketitle
	\thispagestyle{empty}
	\pagestyle{empty}
		\vspace{-0.3in}
	\begin{abstract}
		Voltage control plays an important role in the operation of electricity distribution networks, especially with high penetration of distributed energy resources. These resources introduce significant and fast varying uncertainties. In this paper, we focus on reactive power compensation to control voltage in the presence of uncertainties. We adopt a chance constraint approach that accounts for arbitrary correlations between renewable resources at each of the buses. We show how the problem can be solved efficiently using historical samples \textcolor{black}{analogously to the stochastic quasi-gradient methods}. We also show that this optimization problem is convex for a wide variety of probabilistic distributions. Compared to conventional per-bus chance constraints, our formulation is more robust to uncertainty and more computationally tractable. We illustrate the results using standard IEEE distribution test feeders.
	\end{abstract}



\section{Introduction}
Voltage control is crucial to stable operations of power distribution systems, where it is used to maintain acceptable voltages at all buses under different operating conditions \cite{LietAl2014}. To control voltage, reactive power is traditionally regulated through tap-changing transformers and switched capacitors \cite{ZhangetAl2013}. With recent advances in cyber-infrastructure for communication and control, it is also possible to utilize distributed energy resources (DERs, i.e., electric vehicles \cite{WangEtAl2016}, PV panels \cite{KanchevEtAl2011,Zhang2015})
to provide voltage regulation. There exists an extensive literature in controlling voltage in a distribution network, some of them focus on centralized control \cite{FarivarEtAl2012pes,ValvEtAl2013}, while the others address distributed algorithm \cite{ZhuEtAl2016,LietAl2014, ZhangetAl2013,SulcEtAl2016}. 

In this paper, we focus the problem of \emph{voltage control under uncertainties}. In addition to voltage control capabilities, DERs bring significant uncertainty and fast variations to the distribution system~\cite{TuritsynetAl2011,Zhang2015,Robbins13}. Since most distribution systems do not yet have real-time communication capabilities, a decision made must be valid for a set of possible conditions. For example, the solar PVs in a distribution system may communicate with the feeder (or some other controller) every 5 minutes to receive a command for setting its reactive power, but the changes in solar radiation result in sub-minute timescales changes in its active power. In this paper, we overcome these fast variations using a centralized control framework, where a central controller sends out regulation signals to DERs periodically, where the control signals are designed to regulate voltages for the entirety of the period in the presence of randomness.


A natural framework to handle the uncertainties introduced by the fast variation in the output of the DERs is through chance constraints since they can be used to bound the probability of voltage constraint violations. Chance constraints have been used extensively in power system operations, including~\cite{ZhangEtAl2011,WuEtAl2014, MartinezEtAl2014, WangYiShenEtAl2017}. A challenge in using these constraints is that they may result in difficult optimization problems, requiring algorithms to be designed on a case-by-case basis. The global optimality or even convergence of these algorithms are often hard to guarantee. A second challenge is that the actual probability distribution of the uncertainties is almost never known in practice and has to be estimated or approximated using historical samples, adding to the computational challenge~\cite{YuryEtAl2016}. \textcolor{black}{Recent work on data driven distributional robust optimization sheds light on convex reformulation of the chance constrained problem \cite{AhmedEtAl2018}. Solving the resulting deterministic convex program is computationally efficient but relies on the assumption that uncertainty has joint Gaussian distribution.}


In this paper, we first propose a chance constrained optimization framework for the voltage regulation problem. Then we show how the problem can be solved efficiently using historical samples without explicitly estimating or approximating the probability distribution. Lastly, we provide (minor) conditions on which the algorithm will be guaranteed to be optimal.

Formally, we think about the voltage regulation problem as a chance constrained cost minimization problem.\footnote{Our framework allows a variety of costs, for example, conservative voltage reduction~\cite{ZhuEtAl2016}.} {More specifically, we impose a single chance constraint on the whole system that captures system security. Even this constraint is intractable, in this work, we show how this single constraint can be used and approximated to capture uncertainties from all buses in the system using the linearized DistFlow model introduced in~\cite{Wu89}.}


We extend our previous work in \cite{PanLiAsilomar2017} and show that our proposed voltage control problem can be solved without the need to deploy mixed integer programming (MIP), which is usually used in scenario approximation of chance constraints~\cite{LuedtkeEtAl2008}. Even if the distribution of the uncertainty is unknown, we provide an algorithm that computes a descent direction using historical samples. Moving along this direction would lead to a local minimum. In addition, we show that if the true underlying uncertainty follows a wide family of probability distributions, the proposed algorithm will find the solution \textcolor{black}{for a given set of historical samples. This solution approaches the global optimum if the number of the samples is large.}
In summary, we make the following contributions to voltage control in distribution systems:
\begin{itemize}
	\item The uncertainties of the DER generations are correlated and expressed as a single chance constraint imposed onto the whole system.
	\item We propose an efficient and tractable descent algorithm by finding a valid descent direction at each iteration using historical samples, which avoids MIP formulation and cumbersome integral computation.
	\item We show that if the uncertainty in the DERs comes from a log-concave distribution, \textcolor{black}{the optimization problem is actually convex and the solution of our algorithm approaches global optimum almost surely as the number of samples goes to infinity.}
\end{itemize}

The rest of the paper is organized as follows. Section \ref{sec:distFlow} presents the modeling of the distribution network. Section \ref{sec:prob} proceeds with the formulation of the voltage control problem and demonstrate the robustness of the proposed framework with an illustrating example. In Section \ref{sec:sample_approximation}, we present the proposed descent algorithm to solve the optimization problem efficiently with samples. We further state that the problem is convex for a wide range of probabilistic distributions. Section \ref{sec:simu} validates the statement by simulation results. Finally Section \ref{sec:conc} concludes the paper.

\section{Preliminary: Distribution network} \label{sec:distFlow}
In this section we present the modeling of components in a radial distribution network in power systems. For interested readers, please refer to \cite{FarivarEtAl2012, GanetAl2012} for more details.
\subsection{Power flow model for radial networks}
We consider a distribution network with $N + 1$ buses collected in the set $ \{0, 1, . . . , N\}$. We also denote a line in the network by the pair $(i, k)$ of buses it connects. Bus $0$ is the feeder (reference bus).
For each line $(i, k)$ in the network, its impedance is denoted by $z_{ik} = r_{ik} + \text{j}x_{ik}$, where $r_{ik}$ and $x_{ik}$ is its resistance and reactance.
For each bus $i$, let $V_i$ be the voltage magnitude at bus $i$ and $s_i = p_i + \text{j}q_i$ be the complex power injection, i.e., the generation minus consumption. In addition, the subset $\mathcal{N}_{k}$ denotes bus $k$'s neighboring buses that are further down from the feeder head. The DistFlow equations \cite{Wu89} model the distribution network flow  for every line $(i,k) $ as:
\begin{align*}
P_{ik} - \sum_{l \in \mathcal{N}_k}P_{kl} & = - p_k + r_{ik} \frac{P_{ik}^2 + Q_{ik}^2}{V_i^2},\\
Q_{ik} - \sum_{l \in \mathcal{N}_k}Q_{kl} & = - q_k + x_{ik} \frac{P_{ik}^2 + Q_{ik}^2}{V_i^2},\\
V_i^2 - V_k^2  = &2(r_{ik}P_{ik} + x_{ik}Q_{ik})- (r_{ik}^2 + x_{ik}^2) \frac{P_{ik}^2 + Q_{ik}^2}{V_i^2},
\end{align*}
where $P_{ik},Q_{ik}$ are respectively the real and reactive power flow on line $(i,k)$. The term $\frac{P_{ik}^2 + Q_{ik}^2}{V_i^2}$ represents the loss in the power flow in line $(i,k)$.

\subsection{Linear approximation of the flow model}
A linear approximation of the DistFlow equations can be constructed~\cite{Wu89}. Assume that the losses is negligible and the voltage at each bus is close to 1. This enables us to approximate $V_i^2 - V_k^2 $ by $2(V_i - V_k)$ \cite{ZhuEtAl2016}. 
The linearized DistFlow model is given by :
\begin{subequations}
	\label{lindistflow}
	\begin{align}
	P_{ik} - \sum_{l \in \mathcal{N}_k}P_{kl} & = - p_k ,\\
	Q_{ik} - \sum_{l \in \mathcal{N}_k}Q_{kl} & = - q_k ,\\
	V_i - V_k & = r_{ik}P_{ik} + x_{ik}Q_{ik}.
	\end{align}
\end{subequations}
From \eqref{lindistflow}, we can write the voltage magnitude $\bm{V} = [V_1, \dots, V_N ]^{\top}$ in terms of reactive power injection $\bm{Q} = [Q_1, \dots, Q_N]^{\top}$ and real power injection $\bm{P} = [P_1, \dots, P_N]^{\top}$, and the reference voltage $V_0$ at the feeder:
\begin{equation}\label{system}
\bm{V} = \bm{R}\bm{P} + \bm{X}\bm{Q} + V_0
\end{equation}
where $\bm{R}, \bm{X} \in \mathbb{R}^{N \times N}$ are matrices with $R_{ik}$ and $X_{ik}$ as the element in $i^{th}$ row and $k^{th}$ column, respectively. The voltage profile at bus $1,..., N$ is denoted by $\bm{V} \in \mathbb{R}^N$.

Following the findings in \cite{LietAl2014}, we give the expressions of $R_{ik}$ and $X_{ik}$ in terms of line resistance $r_{ik}$ and reactance $x_{ik}$:
\begin{equation}
\label{RXelement}
R_{ik} = \sum_{(h,l) \in \mathcal{P}_i \cap \mathcal{P}_k} r_{hl}, \ X_{ik} = \sum_{(h,l) \in \mathcal{P}_i \cap \mathcal{P}_k} x_{hl},
\end{equation}
where $\mathcal{P}_i$ is the set of lines on the unique path from bus 0 to bus $i$.

\section{Voltage regulation with reactive power injection}\label{sec:prob}

To facilitate analysis, rewrite $\bm{V}$ as the difference between the bus voltage and reference voltage $V_0$, then \eqref{system} becomes:
\begin{equation}
\bm{V} = \bm{R}\bm{P} + \bm{X}\bm{Q}.
\end{equation}

\textcolor{black}{As renewables introduce uncertainty in the bus voltages, the voltage profile is reformulated into the following form:
	\begin{equation}\label{noisyV}
	\begin{aligned}
	\bm{V} & = \bm{R}(\bm{P}+\Delta \bm{P} ) + \bm{X}\bm{Q} \\
	& = \bm{R}\bm{P} + \bm{X}\bm{Q} + \bm{R}\Delta \bm{P} \\
	& = \bm{R}\bm{P} + \bm{X}\bm{Q} + \bm{\epsilon},
	\end{aligned}
	\end{equation}
	where $\Delta \bm{P}$ is the uncertain real power injection from the DER. We compactly write uncertainty imposed on voltage as $\bm{\epsilon}=\bm{R}{\Delta \bm{P}}$, which represents the uncertainty of the system. Note that even if $\Delta \bm{P}$ is independent across each bus, the resulting randomness in $\bm{\epsilon}$ can still be highly correlated.} 

Ideally, the voltages in the system should be maintained within a tight region (e.g., plus/minus 5\% of nominal). With uncertainties introduced by the operation of the DERs, we model this constraint in a probabilistic fashion. We use the following chance constraint which bounds the probability of the voltages staying in the prescribed bounds:
\begin{equation}\label{max00}
\Pr \{ \underline{\bm{V}}  \leq \bm{V} \leq \overline{\bm{V}} \} \geq \alpha,
\end{equation}
which is equivalent to be written as:
\begin{equation}\label{max0}
\Pr \{ \underline{\bm{V}}  \leq \bm{R}\bm{P} + \bm{X}\bm{Q} + \bm{\epsilon} \leq \overline{\bm{V}} \} \geq \alpha,
\end{equation}
where $\underline{\bm{V}} $ and $\overline{\bm{V}}$ are the voltage bounds. The value of $\alpha$ is a parameter that can be chosen to indicate the probability that event $\underline{\bm{V}}  \leq \bm{R}\bm{P} + \bm{X}\bm{Q} + \bm{\epsilon} \leq \overline{\bm{V}}$ occurs. %

\subsection{Main Optimization Problem}
In this paper we only consider reactive power regulation and assume that active load injection $\bm{P}$ is determined exogenously and the controllable variable is the reactive power injection $\bm{Q}$. Denote $\Pr \{ \underline{\bm{V}}  \leq \bm{R}\bm{P} + \bm{X}\bm{Q} + \bm{\epsilon} \leq \overline{\bm{V}} \}$ by $g(\bm{Q})$, for a given tolerance level $\alpha$, the centralized voltage regulation problem is then captured as the following:
\begin{subequations}\label{prob:main}
	\begin{align}
	&  \min_{\bm{Q}} \   C(\bm Q) \label{eqn:main_obj}\\
	\mbox{s.t.}\ & g(\bm{Q}) \geq \alpha, \label{eqn:alpha} \\
	& \underline{\bm{Q}} \leq \bm{Q} \leq \overline{\bm{Q}}, \label{eqn:Qlimit}
	\end{align}
\end{subequations}
where the cost function $C(\bm{Q})$ can be any convex cost function, for example, the 2 norm deviation of reactive power support $\Vert\bm{Q}\Vert_2$ as discussed in \cite{KekatosEtAl2015}. This cost function encourages small amount of reactive power support to maintain the acceptable voltage deviation due to uncertainty. 

The value of $\alpha$ indicates how the voltage profile behaves within the prescribed bounds. \emph{Risk level} is the usual adopted term with chance constrained optimization and it is equal to $1 - \alpha$, representing the severity of the system state.




\subsection{Existing Benchmark: Per-Bus Constraints}
Our approach is different from the existing literature when dealing with chance constraints. In most existing literature with randomness in the distribution network, chance constraints are introduced in~\cite{ZhangEtAl2011,WuEtAl2014, SaumardEtAl2014,YuryEtAl2016} separately for each dimension of the system as:
\begin{equation}
\begin{aligned}
&\Pr \{ \underline{V}_i  \leq V_i \leq \overline{V}_i \}\\
= & \Pr \{\underline{V}_i \leq \bm{R}_i^{\top}\bm{P} + \bm{X}_i^{\top}\bm{Q} + {\epsilon}_i \leq \overline{V}_i \} \geq  \eta_i,
\end{aligned}
\end{equation}
where $\bm{R}_i^{\top}$ and  $\bm{X}_i^{\top}$ extracts the $i$th row in respective matrices. The chance constraint at bus $i$ is associated with prescribed tolerance $\eta_i$. Assuming that each bus has the same tolerance, the optimization problem that incorporates per-bus chance constraint is in the following form:
\begin{subequations}\label{prob:per_bus}
	\begin{align}
	&  \min_{\bm{Q}} \   \textcolor{black}{\frac{1}{2}\Vert \bm{Q}\Vert_2^2}\\
	s.t.\ & g_i(\bm{Q}) \geq \eta, \forall i, \label{eqn:perbus}\\
	& \underline{\bm{Q}} \leq \bm{Q} \leq \overline{\bm{Q}},
	\end{align}
\end{subequations}
where $g_i(\bm{Q}) \overset{\Delta}{=}  \Pr \{\underline{V}_i \leq \bm{R}_i^{\top}\bm{P} + \bm{X}_i^{\top}\bm{Q} + {\epsilon}_i \leq \overline{V}_i \}$.

\textcolor{black}{Discussions on the difference between the per-bus constraints ( the feasible region depicted by \eqref{eqn:perbus}) and a single joint chance constraint (the feasible region depicted by~\eqref{eqn:alpha}) are provided in \cite{ZhangYuEtAl2013, RoaldEtAl2017, BakerEtAl2017}. The difference resides in finding the correct $\eta$ for each bus in \eqref{eqn:perbus} to approximate the original joint chance constraint and intractability of the constraint.} 
The small example below illustrates that the proposed single system chance constraint framework, captures the coupling between buses and is therefore more realistic and applicable. In addition, as we show in Section \ref{sec:sample_approximation}, the optimization problem can be solved efficiently, negating some of the computational difficulties with higher dimensional chance constraints. \\

\noindent \textbf{Toy Example} Consider a line network with 3 buses.
Suppose that the reactive power injections at bus 1 and 2 are limited to {0.1} p.u., we have a linear constraint as $-0.1 \leq Q_1, Q_2 \leq 0.1$. Then our proposed framework solves the following optimization problem:
\begin{equation}\label{eq:4bus1}
\begin{aligned}
&  \min_{\bm{Q}} \ \textcolor{black}{\frac{1}{2}\Vert \bm{Q}\Vert_2^2}\\
s.t. \
& g(\bm{Q}) \geq \alpha, \\
\   -0.1 &\leq Q_j \leq 0.1, \forall j \in \{1,2\}\\
\end{aligned}
\end{equation}
and its optimal solution is denoted by $\bm{Q}^*_a$. Here we take $\alpha$ to be $0.9$.

The per-bus formulation is the following optimization problem:
\begin{equation}\label{eq:4bus2}
\begin{aligned}
&  \min_{\bm{Q}} \ \textcolor{black}{\frac{1}{2}\Vert \bm{Q}\Vert_2^2} \\
s.t. \
& g_i(\bm{Q}) \geq  \eta, \forall i \in \{1,2\}\\
\   -0.1 &\leq Q_j \leq 0.1, \forall j \in \{1,2\}\\
\end{aligned}
\end{equation}
and its optimal solution of \eqref{eq:4bus2} by $\bm{Q}^*_p$. Suppose that the randomness $\bm{\epsilon}$ follows a Gaussian distribution with $\bm{\mu} =\begin{bmatrix}
0 \\ 0
\end{bmatrix}, \bm{\Sigma} = \begin{bmatrix}
0.002 & 0.0014 \\ 0.0014 & 0.006
\end{bmatrix}$, 
and $\bm{P}$ is randomly picked between $0.3$ and $-0.3$ p.u. Unlike the problem in \eqref{eq:4bus1}, setting a ``right'' $\eta$ in \eqref{eq:4bus2} is not straightforward. Suppose we want to achieve the same level confidence as \eqref{eq:4bus1} where the system operates within the prescribed bounds with probability at least $0.9$, then what is the right $\eta$ to take?
As suggested by previous studies~\cite{ZhangetAl2013}, a natural candidate for $\eta$ is to set it equal to $\sqrt[2]{0.9}=0.949$ by thinking of each bus as independent to each other. A second candidate is simply to set it at $0.9$, the same as $\alpha$.

The main results of the three-bus line network are shown in Table \ref{table:4bus}, where $\bm{Q}^*$ denotes the optimal solution for perspective frameworks. The value $g(\bm{Q}^*)$ denotes the probability $\Pr \{ \underline{\bm{V}}  \leq \bm{R}\bm{P} + \bm{X}\bm{Q}^* + \bm{\epsilon} \leq \overline{\bm{V}} \}$ and is the figure of merit we compare the solutions with. 
The bound on the voltage deviation is denoted by $\overline{\bm{V}} = [\ 0.05 \ 0.05]^{\top}$ and $\underline{\bm{V}} = [\ -0.05 \ -0.05]^{\top}$. As can be seen from Table \ref{table:4bus}, setting $\eta =\sqrt[2]{0.9}$ drives the optimization problem with the per-bus constraint infeasible. The second value of $ \eta = 0.9$ makes the problem feasible, but at the cost of lowering the joint probability
$g(\bm{Q}^*)$ to be 0.86. This result shows that it is difficult to set the correct tolerance level in the per-bus chance constraints. More details about the differences in the feasible regions are given in the expanded online version of the paper~\cite{LiEtAl2017}.
\begin{table}[!h]
	\renewcommand{\arraystretch}{1.3}
	\caption{The value of $g(\bm{Q}^*)$ under different frameworks.} 
	\centering
	\begin{tabular}{|c|c|c|c|}
		\hline
		\bfseries Framework & \eqref{eq:4bus1}, $\alpha = 0.9$ & \eqref{eq:4bus2}, $\eta = \sqrt[2]{0.9}$  & \eqref{eq:4bus2}, $\eta = 0.9$ \\
		\hline
		\bfseries $g(\bm{Q}^*)$ & 0.9 & Infeasible & 0.86  \\
		\hline
	\end{tabular}
	\label{table:4bus}
\end{table}

In the following section, we elaborate on the framework based on \eqref{prob:main} and show that it can be solved efficiently using samples.

\section{Sample-Based Descent Algorithm}\label{sec:sample_approximation}
The proposed optimization problem in \eqref{prob:main} has a convex objective, a box constraint and a chance constraint. However, this chance constraint is not easy to deal with. In addition, the probabilistic distribution of the uncertainties in the system is usually not directly known. Instead, \emph{historical observation samples} of the uncertainties (e.g., recorded solar generation values) are available. These samples can be used to find a probability distribution, but this approach is cumbersome to implement because of two reasons: 1) it is not obvious how to find a good parameterization of the probability distribution to learn from the existing samples; and 2) even if the distribution is given, solving problem \eqref{prob:main} with the chance constraint is non-trivial.


In this section, we present a sample-based algorithm that can efficiently solve the problem in \eqref{prob:main} directly from a set of historical samples, without the need to fit a distribution. Let $\mathcal{S}$ denote the set of samples for the random variable $\bm{\epsilon}$, which we sometimes write as $\bm{\epsilon}^{(s)}, s \in \mathcal{S} = \{1,2, \cdots, S\}$. In the rest of the section, we first provide the algorithm then show it is a descent algorithm. Throughout this section, we use $(\hat{\cdot})$ to represent the approximation of a quantity.


\subsection{Find a descent direction}\label{subsec_random_search}

The first step in solving the optimization problem in \eqref{prob:main} is to relax the constraints using the log barrier method~\cite{Boyd2004}:
\begin{equation}\label{eqn_relaxed}
\begin{aligned}
&  \min_{\bm{Q}} \   \mathcal{L}(\bm{Q}) = \textcolor{black}{\frac{1}{2}\Vert \bm{Q}\Vert_2^2} - \frac{1}{t}\log( - \alpha +  g(\bm{Q})) -\\
& \sum_i\frac{1}{t}\left(\log(\overline{\bm{Q}}_i - \bm{Q}_i) + \log(\bm{Q}_i - \underline{\bm{Q}}_i) \right) ,
\end{aligned}
\end{equation}
where $t$ is a tunable parameter, $(\cdot)_i$ represents the $i$th element of a vector, and $g(\bm{Q})$ is the chance constraint $\Pr \{ \underline{\bm{V}}  \leq \bm{R}\bm{P} + \bm{X}\bm{Q} + \bm{\epsilon} \leq \overline{\bm{V}} \}$. This optimization problem guarantees that the solution is always feasible to \eqref{prob:main} and the problem is equivalent to \eqref{prob:main} if we take $t \rightarrow \infty$.

To solve \eqref{eqn_relaxed}, the natural method to use is gradient descent. The gradient of $\mathcal{L}(\bm{Q})$ is:
\begin{equation} \label{eqn:grad}
\begin{aligned}
\nabla \mathcal{L}(\bm{Q}) &=   \textcolor{black}{\bm{Q}} - \frac{1}{t}\nabla (\log (g(\bm{Q}) - \alpha)) \\
& +  \frac{1}{t}(\overline{\bm{Q}} - \bm{Q})^{-1} - \frac{1}{t}( \bm{Q} -\underline{\bm{Q}})^{-1}.
\end{aligned}
\end{equation}

It turns out that directly using \eqref{eqn:grad} to solve \eqref{eqn_relaxed} is difficult since there is no easy way to find the gradient of the $(\log (g(\bm{Q}) - \alpha))$. In fact, even if the true distribution of $\bm{\epsilon}$ is known, finding $\nabla(\log (g(\bm{Q}) - \alpha))$ requires evaluating multidimensional integrals, which is intractable for most distributions. The key step to mitigate this difficulty is to find some descent direction which approximates the gradient using historical samples, rather than trying to compute the exact gradient. In this section we first describe how to find a proxy for $\nabla \log (g(\bm{Q}) - \alpha)$. Then in Section \ref{subsec_sample_approximation}, we show how it can be calculated from the samples.

The idea to find a descent direction is simple. Given any differentiable function $f:\mathbb{R}^N \rightarrow \mathbb{R}$, let $\bm{e}$ be a random unit vector in $\mathbb{R}^N$ and let $\Delta$ be a positive number. Then for any $\bm x$ in the domain of $f$, we can compute
\begin{equation}\label{eqn:p}
\bm p = \frac{f(\bm{x} + \Delta \cdot \bm{e}) - f(\bm{x})}{\Delta} \bm{e}.
\end{equation}

We can think of $\bm p$ as a random version of the gradient and will be taken as the direction of update in an optimization problem. To be a valid direction, we need to show that $\bm p$ is aligned (making an angle of less than $90^\circ$) with the actual gradient of $f$. This is given by the next theorem:
\begin{theorem}\label{thm_random_search}
	Let $f(\bm{x}):\mathbb{R}^N \rightarrow \mathbb{R}$ be a differentiable function, $\bm{e}$ be a random unit vector in $\mathbb{R}^N$ and $\Delta$ be a positive scalar. Define $\bm{p} = \frac{f(\bm{x} + \Delta \cdot \bm{e}) - f(\bm{x})}{\Delta} \bm{e}$, then $\nabla f(\bm{x})^{\top} \bm{p} > 0$ for a small enough $\Delta$ with probability 1 {when $\nabla f(\bm{x}) \neq 0$}.
\end{theorem}

The proof of Theorem~\ref{thm_random_search} is given in the Appendix. Since $\bm{p}$ is no more than $90^\circ$ degrees apart from the gradient $\nabla f(\bm{x})$ (the dot product $\nabla f(\bm{x})^{\top} \bm{p}$ is positive), the next theorem shows that descending according to $\bm{p}$ is sufficient to reach a local minimum:
\begin{theorem}\label{thm:descent}
	Let $f(\bm{x}):\mathbb{R}^N \rightarrow \mathbb{R}$ be a differentiable function and $\bm{p}_{\bm x}$ be a vector such that {$\nabla f(\bm{x})^{\top} \bm{p}_{\bm x} > 0$}. Suppose $\bm x$ has the following update rule:
	\begin{equation} \label{eqn:update}
	\bm{x}_{m+1}= \bm{x}_m - \lambda_m \bm{p}_{\bm x_m},
	\end{equation}
	then the sequence of $\bm{x}_m$ converges to \textcolor{black}{a stationary point of $f$ for appropriately chosen step sizes $\lambda_m$}.
\end{theorem}

Theorem \ref{thm:descent} states that as long as we move in some descent direction, then we \textcolor{black}{will find a stationary point. Note that the randomness in the samples used to compute the direction would allow it to escape saddle points (similar to stochastic gradient algorithm~\cite{ge2015escaping}). Later in the section, we show that our problem of interest is convex, therefore all stationary points are global minimum.} The step sizes $\lambda_m$ can be found by many methods, including using fixed step sizes if the function is Lipschitz, using a decreasing sequence, using backtracking and others~\cite{bottou2010large}. The proof of Theorem~\ref{thm:descent} proceeds almost identically as proving that the gradient descent converges to a stationary point, and can be found for example in~\cite{ErmolievEtAl1994}.

Together, Theorems~\ref{thm_random_search} and \ref{thm:descent} tell us that as long as we can compute the function values of $f$, then we can use a descent algorithm to find a local optima. The next section describes how we can find the function values of the unconstrained problem in \eqref{eqn_relaxed} efficiently using samples.

\subsection{Sample Approximation}\label{subsec_sample_approximation}

Note that $\mathcal{L}(\bm{Q})$, the objective in the unconstrained problem in \eqref{eqn_relaxed}, is a differentiable function since $\bm{\epsilon}$ has a continuous density function. Then the update rule discussed in Theorem \ref{thm:descent} for $\mathcal{L}(\bm{Q})$ at $m$th iteration is:
$\bm{Q}_{m+1} = \bm{Q}_m - \lambda_m \bm{p}_{\bm Q_m},$
where $\bm{p}_{\bm Q_m}$ should be a proxy for $\nabla \mathcal{L}(\bm{Q})$ in \eqref{eqn:grad} that satisfies Theorem \ref{thm_random_search}. In this section, we focus on how to construct $\bm{p}_{\bm Q_m}$ using samples $\bm{\epsilon}^{(s)}$.

To start with, the difficult term to compute in \eqref{eqn:grad} is $\log (g(\bm{Q})-\alpha)$, and we focus on this term here. Recall that
$g(\bm Q)= \Pr\{\underline{\bm{V}} \leq \bm{RP} +\bm{XQ} + \bm{\epsilon} \leq \overline{\bm{V}}\}$ and we write the right hand side as $\Pr(\bm{RP +XQ})$ for brevity in this section. It is easy to compute a sample approximation of $\Pr(\bm{RP +XQ})$ at a particular $\bm{Q}$.
Denote the sample approximation of function $g(\bm{Q})$ by $\hat{g}(\bm{Q}, \{\bm{\epsilon}^{(s)}, \forall s \in \mathcal{S}\})$, and let $\bm{x} = \bm{RP} + \bm{XQ}$, then define
\begin{equation}\label{eqn_sample_approx_g}
\begin{aligned}
& \hat{g}(\bm{Q}, \{\bm{\epsilon}^{(s)}, \forall s \in \mathcal{S}\}) \\
\overset{\Delta}{=}  &   \frac{\sum_{s = 1}^{S} \mathbbm{1}\{ \underline{\bm{V}} \leq \bm{R}\bm{P} + \bm{X}\bm{Q} + \bm{\epsilon}^{(s)} \leq \overline{\bm{V}}\}}{S}\\
= &  \frac{\sum_{s = 1}^{S} \mathbbm{1}\{ \underline{\bm{V}} \leq \bm{x} + \bm{\epsilon}^{(s)} \leq \overline{\bm{V}}\}}{S}\\
\overset{\Delta}{=} &  \widehat{\Pr}(\bm{x}, \{\bm{\epsilon}^{(s)}, \forall s \in \mathcal{S}\}),
\end{aligned}
\end{equation}
where $S$ is the size of the set $\mathcal{S}$ and $\widehat{\Pr}(\bm{x},\{\bm{\epsilon}^{(s)}, \forall s \in \mathcal{S}\})$ is the sample approximation of $\Pr(\bm{x})$. The quantity in \eqref{eqn_sample_approx_g} can be evaluated quickly since it only involves $SN$ comparisons using samples $\bm{\epsilon}^{(s)}$ and a particular value of $\bm{Q}$. As the sample size grows, $\widehat{\Pr}(\bm{x},\{\bm{\epsilon}^{(s)}, \forall s \in \mathcal{S}\})$ becomes better at approximating $\Pr(\bm{x})$~\cite{HsuEtAl1947}.

Now we can find an approximation of a descent direction for $\log ({g}(\bm{Q})-\alpha)$. First, using the chain rule, the gradient of $\log ({g}(\bm{Q})-\alpha)$ is given by
\begin{equation}\label{eqn_chain_rule}
\nabla \log  (g(\bm{Q})-\alpha)
=  \bm{X}^{\top}\frac{ \nabla \Pr(\bm{x})}{g(\bm{Q})-\alpha},
\end{equation}
where ${\bm{x} = \bm{RP} + \bm{XQ}}$.

Then we introduce the following corollary of Theorem~\ref{thm_random_search}:
\begin{corollary}\label{cor:compos}
	Assume that $f({x}): \mathbb{R} \rightarrow \mathbb{R}$ and $g(\bm{y}): \mathbb{R}^N \rightarrow \mathbb{R}$. If $f(\cdot)$ and $g(\cdot)$ are both differentiable, then {$\nabla_{\bm{y}} f(g(\bm{y}))^{\top} \bm{p} > 0$ with probability 1 when $\nabla_{\bm{y}} f(g(\bm{y})) \neq 0$}, where $\bm{p} = \frac{\partial f({x})}{\partial {x}}\frac{g(\bm{y} + \Delta \cdot \bm{e}) - g(\bm{y})}{\Delta} \bm{e}|_{x = g(\bm{y})}$, for a sufficiently small positive scalar $\Delta$  and $\bm{e}$ is a random unit vector in $\mathbb{R}^N$.
\end{corollary}

Combining the chain rule described in \eqref{eqn_chain_rule} with Corollary~\ref{cor:compos} and sample approximation as defined in~\eqref{eqn_sample_approx_g}, we can find a (random) descent direction of $\log (g(\bm{Q})-\alpha)$ as:
\begin{equation} \label{eqn:approx_log}
\begin{aligned}
& \bm{X}^{\top} \bm{e} \frac{ \widehat{\Pr}(\bm{x} + \Delta \cdot \bm{e}, \{\bm{\epsilon}^{(s)}, \forall s \in \mathcal{S}\}) - \widehat{\Pr}(\bm{x},\{\bm{\epsilon}^{(s)}, \forall s \in \mathcal{S}\})}{\Delta} \\
\cdot & \frac{1}{\hat{g}(\bm{Q}, \{\bm{\epsilon}^{(s)}, \forall s \in \mathcal{S}\})-\alpha}\\
\overset{\Delta}{=} &  
\frac{\widehat{\nabla} g(\bm{Q},  \{\bm{\epsilon}^{(s)}, \forall s \in \mathcal{S}\})}{\hat{g}(\bm{Q}, \{\bm{\epsilon}^{(s)}, \forall s \in \mathcal{S}\})-\alpha},
\end{aligned}
\end{equation}
where $\bm{e}$ is a random unit vector in $\mathbb{R}^{n}$, $\Delta$ is a small positive number and ${\bm{x} = \bm{RP} + \bm{XQ}}$.

This approximation $\widehat{\nabla} g(\bm{Q},  \{\bm{\epsilon}^{(s)}, \forall s \in \mathcal{S}\})$ is {inspired by \emph{stochastic quasi-gradient} (SQG) \cite{ErmolievEtAl1988} of $g(\bm{Q})$}. It was initially brought up to solve stochastic programs where the objective function is hard to evaluate. In our problem, we adopt the similar idea and approximate the gradient which is hard to compute. {Note that if the distribution of $\bm{\epsilon}$ is known and can be efficiently sampled from each time we compute the gradient, the approximation is SQG itself.} 


With the introduction of {such approximation} on $g(\bm{Q})$ and the descent direction of $ \log  (g(\bm{Q})-\alpha)$ in \eqref{eqn:approx_log}, a (random) descent direction of ${\mathcal{L}}(\bm{Q})$ based on samples $\bm{\epsilon}^{(s)}$ is given by:
\begin{equation*}
\begin{aligned}
\widehat{\nabla} \mathcal{L}(\bm{Q},  \{\bm{\epsilon}^{(s)}, \forall s \in \mathcal{S}\}) \overset{\Delta}{=}
& \bm{Q} - \frac{1}{t} \frac{\widehat{\nabla} g(\bm{Q},  \{\bm{\epsilon}^{(s)}, \forall s \in \mathcal{S}\})}{\hat{g}(\bm{Q}, \{\bm{\epsilon}^{(s)}, \forall s \in \mathcal{S}\})-\alpha} \\
&+  \frac{1}{t}(\overline{\bm{Q}} - \bm{Q})^{-1} - \frac{1}{t}( \bm{Q} -\underline{\bm{Q}})^{-1},
\end{aligned}
\end{equation*}
which is a valid $\bm{p}_{\bm{Q}}$ for the update rule.

\subsection{Main Algorithm}
Now we plug in $\widehat{\nabla} \mathcal{L}(\bm{Q},  \{\bm{\epsilon}^{(s)}, \forall s \in \mathcal{S}\})$ into the standard log barrier method and arrive at the proposed descent algorithm {using sample approximation}, shown in Algorithm \ref{algo:log_barrier_quasi}. In Algorithm \ref{algo:log_barrier_quasi}, we have an outer iteration that increases the value of $t$, which forces the optimization problem in \eqref{eqn_relaxed} to be closer to the true optimization problem in \eqref{prob:main}. In the inner iteration when $t$ is fixed, we run the loop till convergence.

\begin{algorithm}[h]
	\caption{Modified log barrier method {using samples}.}
	\label{algo:log_barrier_quasi}
	\begin{algorithmic}[1]
		\State \textbf{Input}: $t_0>0$, $\lambda_0>0$, $\varepsilon_1,\varepsilon_2,\varepsilon_3 >0$, $\tau>1$, $\phi<1$, a feasible $\bm{Q}_0$, $m = k = 1$.
		\While{$\frac{1}{t_k} \geq \varepsilon_1$ and $\lambda_{k} \geq \varepsilon_2$}
		\State $t_k$ = $\tau t_{k-1}$, $\lambda_{k} = \phi \lambda_{k-1}$.
		\While{$\frac{\Vert \bm{Q}_{m} - \bm{Q}_{m - 1}\Vert_2^2}{2} \geq \varepsilon_3$}
		\parState{Compute gradient w.r.t. random sample $\{\epsilon^{(s)}\}$ at $t = t_k$, denote it by $\widehat{\nabla} \mathcal{L}(\bm{Q}_{m-1},  \{\bm{\epsilon}^{(s)}, \forall s \in \mathcal{S}\})$.}
		\parState{Do backtracking until obtaining feasible $\bm{Q}_m = \bm{Q}_{m - 1} - \lambda'\widehat{\nabla} \mathcal{L}(\bm{Q}_{m-1},  \{\bm{\epsilon}^{(s)}, \forall s \in \mathcal{S}\})$, where $\lambda'$ is determined by backtracking. Let $\lambda_k = \lambda'$.}
		\State  $m = m + 1$.
		\EndWhile
		\State $k = k + 1$.
		\EndWhile\label{quasiwhile}
		\State \textbf{Output} $\bm{Q}_{m}$.
	\end{algorithmic}
\end{algorithm}

In addition, since $\widehat{\nabla} \mathcal{L}(\bm{Q}_{m-1},  \{\bm{\epsilon}^{(s)}, \forall s \in \mathcal{S}\})$ is random and the chances that it is exactly orthogonal to the true gradient is zero, Algorithm \ref{algo:log_barrier_quasi} is guaranteed to converge, as given in Theorem~\ref{thm:descent}. In the simulations, we show that Algorithm \ref{algo:log_barrier_quasi} is more efficient than the conventional MIP on real datasets. \textcolor{black}{Note that in Algorithm \ref{algo:log_barrier_quasi} we use log-barrier method as an example to solve the optimization problem in \eqref{prob:main}. The essence is to use a few samples to empirically evaluate the gradient and deploy gradient descent algorithms. As a result, one benefit of such algorithm is that it can incorporate additional constraints, and can be extended to tools like ADMM for parallel computation as well.}


\subsection{Convexity of the optimization problem}
\textcolor{black}{In this section, we show that the chance constraint $g(\bm{Q}) \geq \alpha $ is convex for a broad family of probabilistic distributions. Since all other constraints in \eqref{prob:main} are also convex, the optimization problem is convex and Algorithm \ref{algo:log_barrier_quasi} converges to the \textcolor{black}{global optimum almost surely} when we have infinitely many samples. This is formally shown in Theorem \ref{theorem0}}.

\begin{theorem}\label{theorem0}
	\textcolor{black}{We assume that the uncertainty $\bm{\epsilon}$ has a continuous log-concave probabilistic distribution. Let $\bm{\epsilon}^{(s)}$'s represent realizations of uncertainty $\bm{\epsilon}$ and let $\{\{\bm{\epsilon}^{(s)}, s \in \mathcal{S}_1\}, \{\bm{\epsilon}^{(s)}, s \in \mathcal{S}_2\}, \cdots\}$ be a set of sets of realizations, where the size of $\mathcal{S}_j$ is going to infinity~\footnote{Note that we do not require the sets to be disjoint. For example, $\{\{\bm{\epsilon}^{(s)}, s \in \mathcal{S}_1\}, \{\bm{\epsilon}^{(s)}, s \in \mathcal{S}_2\}, \cdots\}$ can be formed by adding new realizations to $\{\bm{\epsilon}^{(s)}, s \in \mathcal{S}_1\}$.}. We further let $\mathcal{P}^{(j)}$ be the empirical distribution defined by the points in $\{\bm{\epsilon}^{(s)}, s \in \mathcal{S}_j\}$, and $\bm{Q}^{(j)}$ be obtained according Algorithm~\ref{algo:log_barrier_quasi}. Then using the results in \cite{DupacovaEtAl1988,Shapiro1993,KingEtAl1993,Robinson1996}, if $\mathcal{P}^{(j)}$ approaches $\mathcal{P}$ induced by $\bm{\epsilon}$ in distribution, then $\bm{Q}^{(j)}$ converges to $\bm{Q}^*$ and $L(\bm{Q}^{(j)})$ converges to $L(\bm{Q}^*)$ almost surely, where $L(\cdot)$ is defined in \eqref{eqn_relaxed} and $\bm{Q}^*$ is the optimal solution to \eqref{eqn_relaxed}. When $t \rightarrow \infty$ in \eqref{eqn_relaxed}, $\bm{Q}^*$ is also the optimal solution of \eqref{prob:main}.}
\end{theorem}

{Log-concave distributions include many of the commonly encountered distribution in practice, including the joint Gaussian, gamma, uniform, logistic, Laplace and others~\cite{An1996}. For example, the forecast errors of solar and wind are usually assumed to follow one of these distributions. Chance constraints involving log-concave random variables have been discussed extensively in literature, i.e., see \cite{WetsEtAl1983, Prekopa1995, VanEtAl2010, XieEtAl2017} and the references within. In this paper, we specifically focus on the form of rectangular chance constraints involving log-concave distribution. We refer to \cite{WetsEtAl1983, Prekopa1995} on general discussions and provide a concise and simple proof of Theorem \ref{theorem0} in the appendix.} 

		\section{Simulation}\label{sec:simu}
	In this section, we validate Algorithm~\ref{algo:log_barrier_quasi} with the IEEE standard 123 bus distribution system. The solver for {branch and bound (BB) algorithm} in MIP is GUROBI \cite{GUROBI}. {We adopt default Gurobi parameters with feasibility tolerance as 1e-9 and set time limit to be 1800 seconds. Since optimality gap is set to be 1e-10 in Gurobi as default, we set $\varepsilon_1, \varepsilon_2, \varepsilon_3$ to be 1e-10 to compare our algorithm against BB in Gurobi.} We use Matlab on MacBook Pro with 2.7 GHz Intel Core i5 to conduct all the simulations. First, we introduce the benchmark state-of-the-art method we compare Algorithm~\ref{algo:log_barrier_quasi} against. Then we give more results comparing Algorithm 1 and BB on the 123 bus system.

	\subsection{Sample approximation benchmark}\label{subsec:SAA}
	A natural way to approximate the chance constraint is to use indicator variables to replace the probability constraint with samples~\cite{LuedtkeEtAl2008}:
	\begin{subequations}\label{prob:main_SAA}
		\begin{align}
		&  \min_{\bm{Q}} \  \textcolor{black}{\frac{1}{2}\Vert \bm{Q}\Vert_2^2} \label{eqn:main_obj_SAA}\\
		s.t.\ & \sum_{s \in \mathcal{S}} \mathbbm{1} \{\underline{\bm{V}} \leq \bm{R}\bm{P} + \bm{X}\bm{Q} + \bm{\epsilon}^{(s)} \leq \overline{\bm{V}}\} \geq \alpha S, \label{eqn:alpha_SAA} \\
		& \underline{\bm{Q}} \leq \bm{Q} \leq \overline{\bm{Q}},
		\end{align}
	\end{subequations}
	where $ \mathbbm{1}\{\cdot\}$ is the indicator function.
	
	In \eqref{prob:main_SAA}, we use the empirical estimation $\sum_{s \in \mathcal{S}} \mathbbm{1} \{\underline{\bm{V}} \leq \bm{R}\bm{P} + \bm{X}\bm{Q} + \bm{\epsilon}^{(s)} \leq \overline{\bm{V}}\}/S $ to replace the true probability $\Pr \{ \underline{\bm{V}} \leq \bm{R}\bm{P} + \bm{X}\bm{Q} + \bm{\epsilon} \leq \overline{\bm{V}}\}$. However, the indicator function $\mathbbm{1}\{\cdot\}$ is discontinuous and non differentiable. To solve the problem in \eqref{prob:main_SAA}, introduction of binary variables for each scenario $s$ is necessary. Each binary variable indicates whether the voltage bound is violated or not given the particular sample $s$. We can then reformulate the problem as a MIP problem. In this paper, for comparison we actually implement an improved version of \eqref{prob:main_SAA} where the number of binary constraints is reduced by $\alpha$.
	Details are given in~\cite{LiEtAl2017}.

	\subsection{IEEE 123 bus system with renewable integration}
	In this section, we validate the statements by IEEE standard test feeder. Here we use IEEE 123 bus feeder \cite{IEEE123busref} as an example. The test feeder is shown in Fig. \ref{123bus}. We assume that bus 149 is the reference bus.
	\begin{figure}[!ht]
		\centering
		\includegraphics[width=1.05\columnwidth]{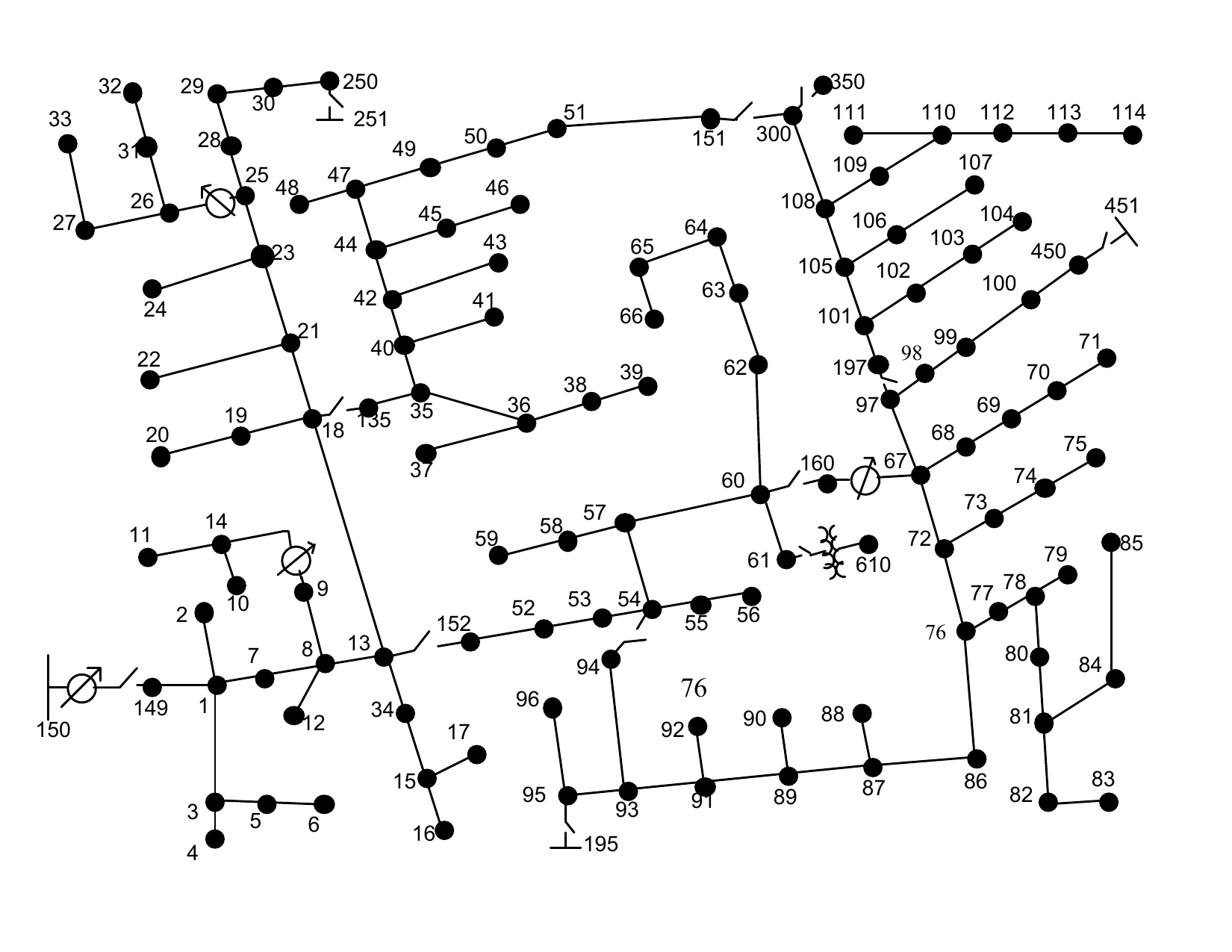}
		\caption{Schematic diagram of IEEE 123 bus test feeder.}
		\label{123bus}
	\end{figure}
	
	\textcolor{black}{The randomness of the system, $\Delta \bm{{P}}$, comes from the fluctuation of the renewable generation.} We use solar generation data from NREL \cite{nrel}. Its characteristic is shown in Fig. \ref{renewable_rand}. Besides solar energy fluctuations, the system is configured such that the switches between buses $13$ and $152$, $18$ and $135$, $54$ and $94$, $97$ and $197$ are closed.
	In addition, we take real power injection as a random vector. We assume that the next scheduling period is 2 p.m. The renewable generation is collected between 2 p.m. - 3 p.m. with 5 minute interval over a past whole year. We set the empirical risk level as $9\%$ (to be conservative), which means that $\alpha = 0.91$ on the samples. The deviation on voltage is set to be no more than $5\%$ of the nominal voltage and the bounds on $\bm{Q}$ is set to be 0.01 p.u.
	
	\begin{figure}[ht]
		\centering
		\includegraphics[width=0.8\columnwidth]{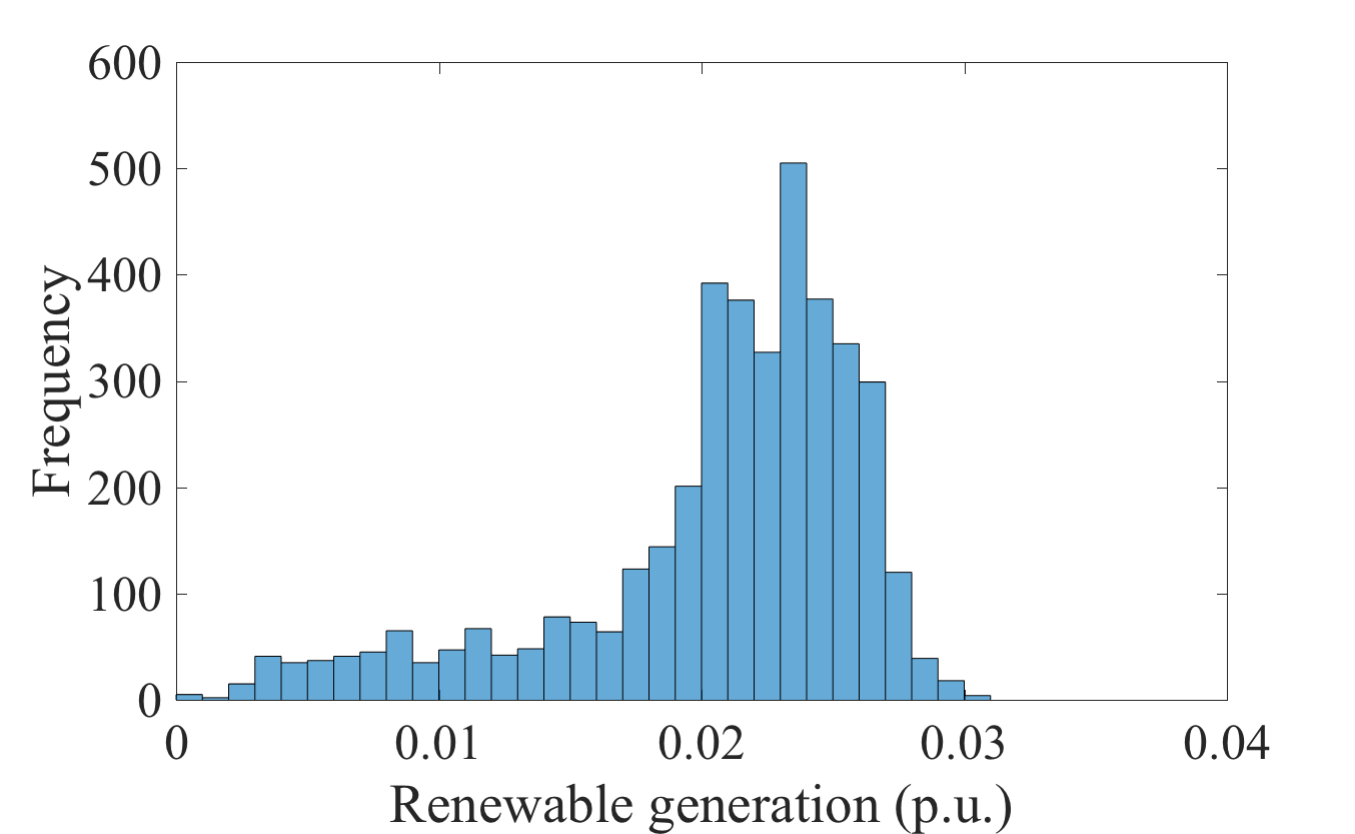}
		\caption{Histogram of PV generation between 2 p.m. and 3 p.m..}
		\label{renewable_rand}
	\end{figure}

	{The results on the 123 bus system are shown in Fig. \ref{122bus_decrease} and Table \ref{table:123bus}. In Fig. \ref{122bus_decrease}, we validate our proposed descent algorithm by plotting the norm of $\bm{Q}$ at each iteration. As can be clearly seen, $\Vert \bm Q \Vert_2$ is effectively decreasing with the proposed algorithm and stabilizes, which means that the algorithm converges to a local optimum.}
	
	\begin{figure}[ht]
		\centering
		\includegraphics[width=0.8\columnwidth]{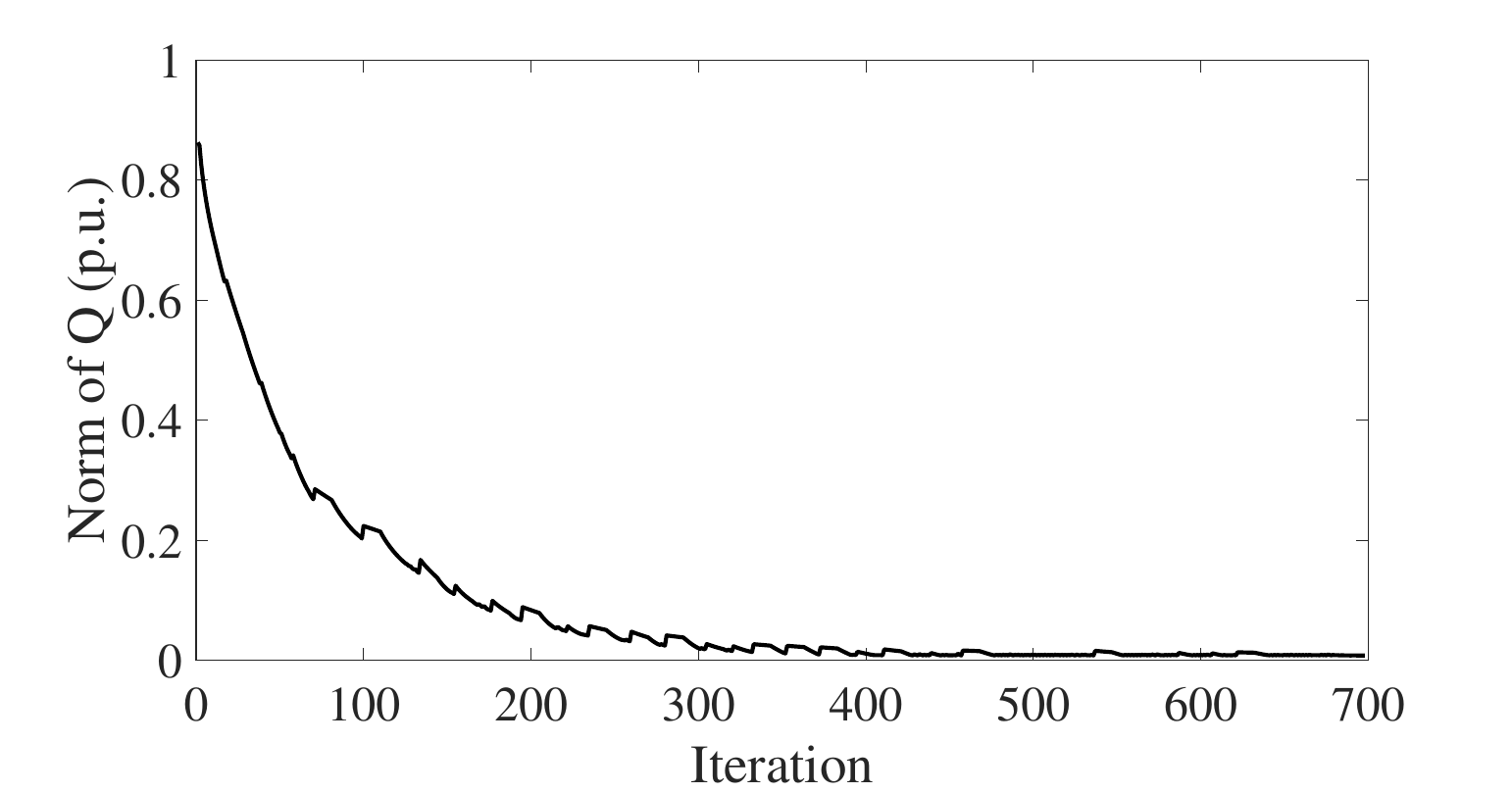}
		\caption{Decrease in $\Vert \bm{Q} \Vert_2$ using the proposed algorithm in IEEE 123 bus system.}
		\label{122bus_decrease}
	\end{figure}
	
	From Table \ref{table:123bus}, we see that BB takes a significantly longer time to solve as compared to Algorithm \ref{algo:log_barrier_quasi}, which cannot be adopted in real time dispatch decisions (i.e., 5 minutes). In addition, it takes less than {27 seconds in our algorithm to find a feasible $\bm{Q}$ with $\Vert \bm Q \Vert_2 \leq 0.009$, where it takes more than 400 seconds for BB to locate such $\bm{Q}$. Even with the help of heuristics by Gurobi, it still takes 164 seconds for Gurobi to find $\bm{Q}$ with $\Vert \bm Q \Vert_2 \leq 0.009$.} Both methods are able to achieve the desired risk level, i.e., around $9\%$ after sufficient iterations. The optimal $\bm{Q}$ returned by Algorithm \ref{algo:log_barrier_quasi} only differs in less than $5\%$ in norm as compared to the solution returned by BB. {To test against overfitting, we further split the samples into half and cross validated the results. We found the risk level and norm of $\bm{Q}$ is subject to a small difference (less than 5\%) using different samples, therefore the algorithm does not suffer from overfitting since the samples are sufficient enough to represent well the randomness in the system.}
	
	\begin{table}[!ht]
		\renewcommand{\arraystretch}{1.3}
		\caption{Comparison between BB and Algorithm \ref{algo:log_barrier_quasi} for IEEE 123 bus with renewable generation.}
		\label{table:123bus}
		\centering
		\begin{tabular}{|p{4cm}|c|c|}
			\hline
			\bfseries   &\bfseries BB  & \bfseries Algo. \ref{algo:log_barrier_quasi}\\
			\hline
			Running time (seconds)  & {$710$}  & $38$\\
			\hline
			Empirical risk level & $9.00\%$  & $8.97\%$\\
			\hline
			$\Vert\bm{Q}\Vert_2$ (p.u.)  & 0.0083 &  0.0086 \\
			\hline
			Most sensitive bus   & 104 &  104 \\
			\hline
			Number of buses with
			non-trivial reactive power support   & 41 &  41 \\
			
			\hline
		\end{tabular}
	\end{table}
	
	In addition, from the solution of BB, we find that the significant amount of reactive power support occurs at buses that are further down the feeder, i.e., bus 70, 71 and bus 104. These buses are more vulnerable towards uncertainty in the system thus more reactive power support is needed to avoid large voltage deviation. We find that Algorithm \ref{algo:log_barrier_quasi} locates those buses in a much faster computational time. Both algorithms are able to find 41 such \textcolor{black}{buses. 
		The} most reactive power support occurs at bus 104, with $Q_{104} = -0.0015$ p.u. as in BB and $Q_{104} = -0.0011$ p.u. as in Algorithm \ref{algo:log_barrier_quasi}. These reactive power support serve to mitigate the impact (over-voltage) by real power injection of random renewables.

		\section{Conclusion}\label{sec:conc}

		In this paper we adopt a stochastic framework to formulate voltage control problems with uncertainty. Compared to existing literature, we use a single chance constraint to capture the uncertainty in the distribution system. We show that this formulation is more realistic and less conservative than placing constraints on every bus in the distribution network. We also propose a tractable algorithm to solve the optimization problem without explicitly knowing the underlying probabilistic distribution. \textcolor{black}{The proposed algorithm solves the problem efficiently given a set of samples. For a wide range of probabilistic distributions, this solution converges to global optimum almost surely when number of samples goes to infinity because the problem is convex.} Simulation results validate our statements by standard IEEE test feeders.

	\bibliographystyle{IEEEtran}	
	\bibliography{ref}

	\appendix
\subsection{Discussion on convexity of the optimization problem}\label{sec:optprob}
{In Theorem \ref{theorem0}, we assume that the randomness comes from a specific family of distribution.} We now formally define log-concavity.
\begin{definition}
	A non-negative function $f:\mathbb{R}^N \rightarrow \mathbb{R}_{+}$ is logarithmically concave (or log-concave for short) if its domain is a convex set, and if $\log(f(x))$ is concave. 
\end{definition}

To prove Theorem \ref{theorem0}, we introduce two lemmas that discuss log-concavity of certain functions. {These two lemmas are derived easily following Chapter 4 in \cite{Prekopa1995}.} Lemma \ref{theorem2} states that the accumulated mass of a log-concave probabilistic function over a {rectangular box} is log-concave. Lemma \ref{theorem3} states that applying linear transformation to the variable in a log-concave function yields another log-concave function, given that the linear transformation has full row rank.

\begin{lemma}\label{theorem2}
	Denote $F(\bm{z}) = \int_{\bm{z}-\bm{u}}^{\bm{z}} f(\epsilon_1,\dots,\epsilon_N) \,d\epsilon_1 \dots d\epsilon_N$ {where $\bm{u} > 0$ is a fixed vector}. If the distribution $f(\epsilon_1,\dots,\epsilon_N)$ is log-concave in $\bm{\epsilon}$, then $F(\bm{z})$ is log-concave in $\bm{z}$.
\end{lemma}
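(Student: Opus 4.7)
The plan is to reduce the lemma to the classical Prékopa theorem on marginals of log-concave functions. First, I would perform the change of variables $u_i = z_i - \epsilon'_i$, which rewrites the integral over the shifting box $[z_i-1,z_i]$ as an integral over the fixed unit hypercube:
\begin{equation*}
F(\bm{z}) \;=\; \int_{[0,1]^N} f(\bm{z}-\bm{u}) \, d\bm{u}.
\end{equation*}
This decouples the $\bm{z}$-dependence of the integration limits from the variable of integration, which is the essential simplification.

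Next, I would define the joint integrand $h(\bm{z},\bm{u}) = f(\bm{z}-\bm{u}) \cdot \mathbf{1}_{[0,1]^N}(\bm{u})$ on $\mathbb{R}^N \times \mathbb{R}^N$ and show it is log-concave in $(\bm{z},\bm{u})$ jointly. Two facts do the work here: (i) the map $(\bm{z},\bm{u}) \mapsto \bm{z}-\bm{u}$ is linear, so composing $f$ (which is log-concave by hypothesis) with this map preserves log-concavity; (ii) the indicator of the convex set $[0,1]^N$ is log-concave as a function on $\mathbb{R}^N$ (it takes value $1$ on a convex set and $0$ elsewhere), and extended trivially in $\bm{z}$ it is log-concave on the product space. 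The product of two log-concave functions is log-concave, so $h$ is log-concave in $(\bm{z},\bm{u})$.

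Finally, I would invoke Prékopa's theorem, which asserts that if $h(\bm{z},\bm{u})$ is log-concave jointly, then the marginal $\bm{z} \mapsto \int h(\bm{z},\bm{u})\, d\bm{u}$ is log-concave. Applied here, this gives exactly that $F(\bm{z})$ is log-concave in $\bm{z}$, as claimed. The main subtlety, and the step I would emphasize carefully, is the indicator-function trick that absorbs the $\bm{z}$-dependent limits into a single joint log-concave integrand; once that is in place, Prékopa's theorem delivers the conclusion without any further calculation.
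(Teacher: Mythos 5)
Your proof is correct and follows essentially the same route as the paper: both arguments rewrite $F(\bm{z})$ as the convolution of $f$ with the uniform density (equivalently, the indicator) on the unit cube $[0,1]^N$, and then appeal to a standard log-concavity preservation result. The only cosmetic difference is that the paper cites the closure of log-concave functions under convolution directly, whereas you unpack that closure result one level further by applying Pr\'ekopa's theorem on marginals of a jointly log-concave integrand.
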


\begin{lemma} \label{theorem3}
	Assume that $F(\bm{z})$ is a log-concave function, $\bm{z} \in \mathbb{R}^{N}$ and that $\bm{z} = A\bm{y} + \bm{b}$ with $\bm{y} \in \mathbb{R}^{M}$, $A \in \mathbb{R}^{N \times M}$, $\bm{b} \in \mathbb{R}^{N}$. If $A$ has rank $N$, then ${g}(\bm{y}) = F(A\bm{y} + \bm{b})$ is also a log-concave function.
\end{lemma}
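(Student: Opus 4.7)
The plan is to prove Lemma~\ref{theorem3} by direct composition: log-concavity is preserved under affine reparametrization of the argument, so the conclusion should follow almost immediately from the definition. The rank-$N$ hypothesis on $A$ enters only to guarantee that the reparametrization has full-dimensional image, so that $g$ is a genuine log-concave function on $\mathbb{R}^M$ and not one that is supported on a lower-dimensional slice.

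First I would verify that the effective domain of $g$, namely
\[
\mathrm{dom}(g)=\{\bm{y}\in\mathbb{R}^M : A\bm{y}+\bm{b}\in\mathrm{dom}(F)\},
\]
is convex. This is immediate from the fact that $\mathrm{dom}(F)$ is convex (since $F$ is log concave) and that the affine preimage of a convex set is convex.

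The main step is then the one-line calculation at the heart of the lemma. Let $T(\bm{y})=A\bm{y}+\bm{b}$, fix $\bm{y}_1,\bm{y}_2\in\mathrm{dom}(g)$ and $\theta\in(0,1)$, and use affinity of $T$ to write $T(\theta\bm{y}_1+(1-\theta)\bm{y}_2)=\theta T(\bm{y}_1)+(1-\theta)T(\bm{y}_2)$. Applying the log-concavity of $F$ at that convex combination yields
\[
g(\theta\bm{y}_1+(1-\theta)\bm{y}_2) \;=\; F\bigl(\theta T(\bm{y}_1)+(1-\theta)T(\bm{y}_2)\bigr) \;\geq\; F(T(\bm{y}_1))^{\theta}\,F(T(\bm{y}_2))^{1-\theta} \;=\; g(\bm{y}_1)^{\theta}\,g(\bm{y}_2)^{1-\theta},
\]
which is exactly the log-concavity inequality for $g$.

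There is no real obstacle here; this is the standard preservation property that composing a log-concave function with an affine map keeps log-concavity, and the argument above uses only the definition plus linearity of $T$. The only place the rank-$N$ hypothesis gets invoked is to make $T$ surjective onto $\mathbb{R}^N$, ensuring that as $\bm{y}$ ranges over $\mathrm{dom}(g)$ the image $T(\bm{y})$ sweeps out all of $\mathrm{dom}(F)$; this is what makes the result usable in Theorem~\ref{theorem0}, where we need the chance-constraint function $g(\bm{Q})=F(\bm{X}\bm{Q}+\bm{b})$ to inherit nontrivial log-concavity over the full box of admissible reactive injections. The only minor technicality worth flagging is the boundary case $g(\bm{y}_i)=0$, which is handled by the standard convention $0^{\theta}\cdot c = 0$ in the log-concavity inequality.
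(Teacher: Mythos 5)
Your proof is correct, but it takes a genuinely different route from the paper. The paper argues via the second-order characterization of log-concavity: it writes the condition $\nabla^2\log F(\bm{z})\preccurlyeq 0$ as $F(\bm{z})\nabla^2F(\bm{z})-\nabla F(\bm{z})\nabla F(\bm{z})^{\top}\preccurlyeq 0$, computes $\nabla g$ and $\nabla^2 g$ by the chain rule, and checks that the resulting matrix $A^{\top}H_F(A\bm{y}+\bm{b})A$ is negative semidefinite. Your argument instead applies the definitional inequality $F(\theta\bm{z}_1+(1-\theta)\bm{z}_2)\geq F(\bm{z}_1)^{\theta}F(\bm{z}_2)^{1-\theta}$ directly, using only the affinity of $T(\bm{y})=A\bm{y}+\bm{b}$. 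Your route is both more elementary and strictly more general: the paper's Hessian computation tacitly assumes $F$ is twice differentiable, which is not part of the lemma's hypotheses, whereas your argument needs no smoothness at all, and you correctly handle the convexity of $\mathrm{dom}(g)$ and the boundary case $g=0$. You are also right that the rank-$N$ hypothesis is not needed for the log-concavity conclusion itself; it is worth noting that the paper's own use of that hypothesis (arguing that $A\bm{z}'$ spans $\mathbb{R}^N$) is likewise unnecessary, since $\bm{z}'^{\top}A^{\top}H_FA\bm{z}'=(A\bm{z}')^{\top}H_F(A\bm{z}')\leq 0$ holds for any $A$ once $H_F\preccurlyeq 0$. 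What the paper's approach buys in exchange is an explicit formula for the gradient and Hessian of $g$, which is convenient for the gradient-based solver the paper advocates; what yours buys is a shorter, assumption-free proof of the lemma as stated.
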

With Lemma \ref{theorem2} and Lemma \ref{theorem3}, Theorem~\ref{theorem0} follows. 

\subsection{Proof of Theorem~\ref{thm_random_search} and Corollary \ref{cor:compos}}
For this proof, we use the small $o$ notation, where $f(x)=o(g(x))$ as $x \rightarrow 0$ means that $\lim_{x\rightarrow 0} \frac{f(x)}{g(x)}=0$. The proof of Theorem~\ref{thm_random_search} is given by the following set of equations:
\begin{equation}
\begin{aligned}
\nabla f(\bm{x})^{\top} \bm{p} & = \nabla f(\bm{x})^{\top} \frac{f(\bm{x} + \Delta \cdot \bm{e}) - f(\bm{x})}{\Delta} \bm{e} \\
& \stackrel{(a)}{=}  \nabla f(\bm{x})^{\top}  \frac{ \nabla f(\bm{x})^{\top} \Delta \cdot \bm{e} + o(\Vert\Delta \cdot \bm{e}\Vert)}{\Delta} \bm{e} \\
& = (\nabla f(\bm{x})^{\top} \bm{e})^2 + \frac{o(\Vert\Delta \cdot \bm{e}\Vert)}{\Delta}\nabla f(\bm{x})^{\top} \bm{e} \\
& = (\nabla f(\bm{x})^{\top} \bm{e} )^2 + \frac{o(\Vert\Delta \cdot \bm{e}\Vert)}{\Vert\Delta \cdot \bm{e}\Vert}\nabla f(\bm{x})^{\top} \bm{e},
\end{aligned}
\end{equation}
where $(a)$ follows from the first order Taylor expansion of a differentiable function.
Note that $\frac{o(\Vert\Delta \cdot \bm{e}\Vert)}{\Vert\Delta \cdot \bm{e}\Vert } \rightarrow 0$ as $\Vert\Delta \cdot \bm{e}\Vert = \Delta \rightarrow 0$, for every $\bm{e}$. Then with probability 1 we can take $\Delta$ small enough and ensure that $\nabla f(\bm{x})^{\top} \bm{p}  > 0$.
The proof of Corollary~\ref{cor:compos} follows in a similar fashion using the chain rule:
%
%
%
\begin{equation*}
\begin{aligned}
& \nabla_{\bm{y}} f(g(\bm{y}))^{\top} \bm{p} \\
= & (J_g(\bm{y})^{\top}\nabla f(\bm{x})|_{\bm{x} = g(\bm{y})} )^{\top}\bm{p} \\
= &   \nabla f(\bm{x})^{\top}J_g(\bm{y}) J_g(\bm{y})^{\top}\frac{f(\bm{x} + \Delta \bm{e}) - f(\bm{x})}{\Delta} \bm{e}\\
= &\text{tr} (  J_g(\bm{y}) J_g(\bm{y})^{\top}\frac{f(\bm{x} + \Delta \bm{e}) - f(\bm{x})}{\Delta} \bm{e}  \nabla f(\bm{x}) ^{\top}) \\
\overset{(a)}{\geq} & \lambda_{min}(J_g(\bm{y}) J_g(\bm{y})^{\top}) \text{tr} (\frac{f(\bm{x} + \Delta \bm{e}) - f(\bm{x})}{\Delta} \bm{e}  \nabla f(\bm{x})^{\top})  \\
\overset{(b)}= &  \lambda_{min}( J_g(\bm{y}) J_g(\bm{y})^{\top}) \text{tr} (\nabla f(\bm{x})^{\top}\frac{f(\bm{x} + \Delta \bm{e}) - f(\bm{x})}{\Delta} \bm{e}  )  \\
\overset{(c)}{\geq} & 0,
\end{aligned}
\end{equation*}
where $(a)$ is the direct result from \cite{FangEtAl1994} to bound trace of a symmetric matrix by its smallest eigenvalue $\lambda_{min}(\cdot)$, $(b)$ comes from the fact that trace is invariant of cyclic permutations, and inequality $(c)$ is again based on Theorem \ref{thm_random_search} and that $\lambda_{min}(J_g(\bm{y}) J_g(\bm{y})^{\top}) \geq 0$. More specifically, if $J_g(\bm{y}) J_g(\bm{y})^{\top} \succ 0$, then $\nabla_{\bm{y}} f(g(\bm{y}))^{\top} \bm{p}  >0$.


\subsection{Proof of Theorem~\ref{theorem0}}
We reformulate $g(\bm{Q})$ into:
\begin{subequations}\label{formulation2}
	\begin{align}
	& g(\bm{Q}) \\
	= & \Pr \{ \bm{0}  \leq  - \underline{\bm{V}} + \bm{R}\bm{P} + \bm{X}\bm{Q} + \bm{\epsilon} \leq  - \underline{\bm{V}} + \overline{\bm{V}} \} \\
	=  & \Pr \{ \bm{0}  \leq  \bm{Y}(\bm{Q}) + \bm{\epsilon} \leq  \bm{u} \}, \label{eqn:YQ} \\
	= & \Pr \{ -\bm{Y}(\bm{Q})  \leq   \bm{\epsilon} \leq  \bm{u} - \bm{Y}(\bm{Q}) \}, \label{eqn:YQ_2}
	\end{align}
\end{subequations}
where $\bm{Y}(\bm{Q}) = - \underline{\bm{V}} + \bm{R}\bm{P} + \bm{X}\bm{Q}$ and $- \underline{\bm{V}} + \overline{\bm{V}} = \bm{u}$.

{Let $\bm{z} = \bm{u} - \bm{Y}(\bm{Q}) $, let $c = \prod_{i}^{N}u_i$, where $\bm{u} = [u_1, \cdots, u_N]^{\top}$. Then \eqref{eqn:YQ_2} can be compactly written as:}
\begin{equation}\label{eqn:prob_x}
\Pr \{ \bm{z} - \bm{u}  \leq   \bm{\epsilon} \leq \bm{z} \} \overset{\Delta}{=} F(\bm{z}).
\end{equation}

{With Lemma \ref{theorem2}, 
	we know that $F(\bm{z})$ is log concave in $\bm{z}$. Since $\bm{z} = \bm{u} - \bm{Y}(\bm{Q})  =  \bm{A} \bm{Q} + \bm{b}$, where $\bm{A}$ is full rank and $\bm{b}$ is a constant, Lemma \ref{theorem3} implies $g(\bm{Q})$ is a log-concave function.} 

\end{document}